\documentclass{birkjour} 
\usepackage{amsmath}
\usepackage{amssymb}
\usepackage{mathabx}
\usepackage{xcolor}

\newtheorem{theorem}{Theorem}[section]

\newtheorem{lemma}[theorem]{Lemma}
\newtheorem{corollary}[theorem]{Corollary}
\newtheorem{definition}[theorem]{Definition}
\theoremstyle{definition}
\newtheorem{remark}[theorem]{Remark}
\numberwithin{equation}{section}

\newcommand{\krein}{Kre{\u\i}n }
\newcommand{\ip}[2]{\left<#1,#2\right>}
\newcommand{\cA}{{\mathcal A}}
\newcommand{\cB}{{\mathcal B}}
\newcommand{\cC}{{\mathcal C}}

\newcommand{\cG}{{\mathcal G}}
\newcommand{\cH}{{\mathcal H}}
\newcommand{\cK}{{\mathcal K}}
\newcommand{\cL}{{\mathcal L}}
\newcommand{\cM}{{\mathcal M}}
\newcommand{\cN}{{\mathcal N}}

\newcommand{\cX}{{\mathcal X}}
\newcommand{\fdh}{\cH = \cH_+ \oplus \cH_-}

\newcommand{\lh}{\cL(\cH)}
\newcommand{\lhk}{\cL(\cH,\cK)}
\newcommand{\lkh}{\cL(\cK,\cH)}
\newcommand{\lk}{\cL(\cK)}
\newcommand{\ran}{{\rm ran\,}}
\renewcommand{\ker}{{\rm ker\,}}

\newcommand{\indplusminus}{{\rm{ind}_\pm}\,}

\newcommand{\lah}{\cL(\cA,\cH)}
\newcommand{\lak}{\cL(\cA,\cK)}
\newcommand{\lbh}{\cL(\cB,\cH)}

\begin{document}

\title[Factorization of selfadjoint operators]{Hermitian indices and
  factorization of selfadjoint operators on a Kre{\u\i}n space}

\author[M.\ A.\ Dritschel]{Michael A.\ Dritschel}
\address{School of Mathematics, Statistics, \& Physics, Newcastle
  University, Newcastle upon Tyne, NE1 7RU, United Kingdom}
\email{michael.dritschel@newcastle.ac.uk} 

\author[A.\ Maestripieri]{Alejandra Maestripieri}
 \address{Instituto Argentino de Matem\'atica ``Alberto
   P. Calder\'on'' CONICET, Saavedra 15, Piso 3, (1083) Buenos Aires,
   Argentina}
\email{amaestripieri@gmail.com}

\author[J.\ Rovnyak]{James Rovnyak}
\address{Department of Mathematics, University of Virginia,
P.O.\ Box 400137, Charlottesville, Virginia, U.S.A.}
\email{rovnyak@virginia.edu}

 \subjclass{46C20, 46B50}
 
 \dedicatory{ \textcolor{black}{In memory of Heinz Langer, for his
     many contributions and leadership in the study of indefinite
     inner product spaces.} }

 \keywords{\krein space, selfadjoint operator, hermitian indices, rank
   of positivity, congruence, Bogn{\'a}r-Kr{\'a}mli factorization,
   induced \krein space, continuously contained \krein space}

 \begin{abstract}
   The hermitian indices of a selfadjoint operator $C$ on a \krein
   space $\cH$ are defined as geometric measures of positivity and
   negativity of the operator.  A different pair of indices arises in
   the Bogn{\'a}r-Kr{\'a}mli factorization of~$C$, which writes $C$ as
   a product $AA^*$ where $A$ acts on a \krein space $\cA$ into $\cH$
   and has zero kernel; the new indices are the positive and negative
   indices of~$\cA$.  Such factorizations are far from unique.  When
   $\cH$ is separable, it is known that the two notions of indices
   always coincide, and this has applications to index formulas in the
   theory of Julia operators and completion problems for operator
   matrices.   A new proof of the equality of indices that does not
  require separability is given in this work.  
\end{abstract}
\maketitle

\section{Introduction}\label{S:Introduction and preliminaries}

The notion of hermitian indices $h_\pm(C)$ for a selfadjoint operator
$C$ on a \krein space $\cH$, defined below, is closely related to
representations of $C$ as the product of an operator $A$ and its
adjoint.  The existence of such representations with $A\in\lh$ was
characterized by J.\ Bogn{\'a}r and A.\ Kr{\'a}mli \cite[Th.\
1]{BognarKramli} in terms of ranks of positivity and negativity of $C$
(see also Bogn{\'a}r \cite[Th.\ VII.2.1]{Bognar1974}).  A related
factorization $C=AA^*$ with $A$ acting from an external \krein space
$\cA$ into $\cH$ and $\ker A = \{0\}$ was encountered in applications
by two of the present authors \cite{DR1990, DRFields}.  Of primary
interest in the applications is the fact that the indices
$\indplusminus \cA$ of the external \krein space $\cA$ do not depend
on the choice of factorization.  For separable \krein spaces, it is
shown in \cite{DR1990} that this is always the case, and in fact
$\indplusminus \cA$ coincide with an ad hoc definition of hermitian
indices $h_\pm(C)$ for separable \krein spaces \cite[\S1.2]{DR1990}.
In this paper we extend the ad hoc definition of hermitian indices to
arbitrary \krein spaces and show that $\indplusminus \cA = h_\pm(C)$
for nonseparable as well as separable \krein spaces.  This effectively
removes the assumption of separability in index formulas that appear
in \cite{DR1990, DRFields}.

Notation and terminology generally follow \cite{DR1990, DRFields}.  A
\krein space is a complex inner product space
$(\cH, {\ip{\cdot}{\cdot}}_{\cH})$ which admits a {\bf fundamental
  decomposition} $\fdh$ such that $\cH_+$ is a Hilbert space and
$\cH_-$ is the antispace of a Hilbert space.  The indices of~$\cH$,
$\indplusminus \cH = \dim \cH_\pm$, do not depend on the choice of
fundamental decomposition.  Every fundamental decomposition induces an
{\bf associated Hilbert space} $|\cH|$ which coincides with $\cH$ as a
vector space and replaces $\cH_-$ by the original Hilbert space.  A
corresponding {\bf norm} for the \krein space~$\cH$ is defined by
$ \|f\|^2 = {\ip{f_+}{f_+}}_{\cH} + |{\ip{f_-}{f_-}}_{\cH}| $, where
$f_\pm$ are the components of $f$ in~$\cH_\pm$.  Any two norms are
equivalent and determine a unique {\bf strong topology} on~$\cH$.  A
{\bf subspace} is simply a linear manifold and need not be closed.
The {\bf dimension} of a closed subspace is its dimension in any
associated Hilbert space (the choice does not matter).  Operators are
assumed everywhere defined and continuous.  If $\cH$ and $\cK$ are
\krein spaces, $\lh$ and $\lhk$ are the spaces of continuous operators
on $\cH$ into itself and on $\cH$ into $\cK$.  The \krein space
adjoint of an operator $A$ is denoted~$A^*$.
If $C$ is a selfadjoint operator on a \krein space~$\cH$, the formula 
\begin{equation*}
  {\ip{f}{g}}_C = {\ip{Cf}{g}}_\cH, \qquad f,g\in\cH,
\end{equation*}
defines a linear and symmetric {\bf $C$-inner product} on~$\cH$.  The
induced quadratic form ${\ip{Cf}{f}}_\cH$, $f\in\cH$, thus assumes
only real values.  A subspace $\cM$ of $\cH$ is $C$-{\bf strictly
  positive} if ${\ip{f}{f}}_C > 0$ for every $f\neq0$ in~$\cM$, and
$C$-{\bf strictly negative} if ${\ip{f}{f}}_C < 0$ for every $f\neq0$
in~$\cM$; {\bf $C$-orthogonality} for vectors in~$\cH$, $\perp_C$,
refers to orthogonality in the $C$-inner product.

\textcolor{black}{ Section \ref{Sect2} presents a self-contained
  account of hermitian indices and includes a technical result,
  Theorem \ref{keyth}, on signature operators on a Hilbert space.  The
  theorem is used in Section~\ref{Sect3} to prove our main result,
  Theorem \ref{mainth}, which is also stated in an equivalent form for
  continuously contained \krein spaces.}

\section{Hermitian indices}\label{Sect2}

\textcolor{black}{ The concepts of ranks of positivity and negativity
  for operators and quadratic forms are classical and appear already
  in elementary matrix theory.  For operators on \krein spaces, they
  are formalized in Bogn{\'a}r \cite{Bognar1974} in terms of
  decomposable inner product spaces.  Hermitian indices provide an
  equivalent viewpoint that reduces to the approach of Potapov
  \cite[Ch.\ 2]{Potapov1955} for finite-dimensional \krein spaces.}

Some preliminaries on the notion of congruence are needed.
\krein space operators $A\in\lh$ and $ B\in\lk$ are said to be {\bf
  congruent} if $A=X^*BX$ for some invertible operator $X\in\lhk$.
Congruence is an equivalence relation.
    
\begin{lemma}\label{LemmaA}
  Every selfadjoint operator on a \krein space is congruent to a
  selfadjoint operator on a Hilbert space.
\end{lemma}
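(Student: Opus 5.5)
The idea is to use a fundamental symmetry to pass from the Kre\u{\i}n space $\cH$ to its associated Hilbert space $|\cH|$. Let $C\in\lh$ be selfadjoint. Choose a fundamental decomposition $\fdh$ with associated Hilbert space $|\cH|$, and let $J=P_+-P_-$ be the corresponding fundamental symmetry. By construction,
\[
{\ip{f}{g}}_\cH={\ip{Jf}{g}}_{|\cH|}, \qquad J=J^{-1}=J^\times,
\]
where $\times$ denotes the Hilbert space adjoint in $|\cH|$.

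The Kre\u{\i}n and Hilbert adjoints are related as follows. For an operator $T$ on $\cH$, regarded also as an operator on $|\cH|$, one has $T^*=JT^\times J$. Similarly, for $X$ from $\cH$ to $|\cH|$ or from $|\cH|$ to $\cH$, the adjoint picks up a single factor of $J$ on the Kre\u{\i}n side.

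Take $\cK=|\cH|$, which is a Hilbert space and hence a Kre\u{\i}n space with trivial negative part. Let $X\in\cL(\cH,\cK)$ be the identity map $f\mapsto f$, which is continuous and invertible because the strong topology of $\cH$ is the norm topology of $|\cH|$.

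Next compute $X^*\in\cL(\cK,\cH)$. It is defined by ${\ip{X^*k}{f}}_\cH={\ip{k}{Xf}}_{\cK}={\ip{k}{f}}_{|\cH|}$. Since ${\ip{Jk}{f}}_\cH={\ip{k}{f}}_{|\cH|}$, we get $X^*=J$ as a map of vectors.

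Now set $B=JC$, viewed as an operator on $\cK=|\cH|$. It is Hilbert-selfadjoint, because
\[
{\ip{JCf}{g}}_{|\cH|}={\ip{Cf}{g}}_\cH={\ip{f}{Cg}}_\cH={\ip{f}{JCg}}_{|\cH|}.
\]
Moreover,
\[
X^*BX=J(JC)=C.
\]
So $C$ is congruent to the selfadjoint operator $B=JC$ on the Hilbert space $|\cH|$.

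The only real point of care is the bookkeeping of adjoints between the two inner products on the same vector space. One must use the Kre\u{\i}n adjoint of $X$, which is taken from $\cK$ into $\cH$, and must not confuse it with the Hilbert adjoint. With that distinction kept, the rest is routine verification of continuity and invertibility.

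An equivalent way to choose the congruence is to take $X=J$ as the map from $\cH$ to $|\cH|$ and $B=CJ$, with the adjoint checked in the same way.
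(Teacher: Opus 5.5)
Your proposal is correct and is essentially the paper's argument with a concrete choice: the paper takes an arbitrary invertible $X$ from $\cH$ onto a Hilbert space $\cK$ and sets $D=X^{*\,-1}CX^{-1}$, using $X^{*\,-1}=X^{-1\,*}$ to get selfadjointness. Your choice of the identity map onto $|\cH|$ gives $X^*=J$ and $D=JC$, and you verify the selfadjointness of $D$ directly in $|\cH|$ rather than abstractly.
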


\begin{proof}
  Let $\cH$ be a \krein space, $C\in\lh$ a selfadjoint operator.
  Choose any invertible operator $X$ from $\cH$ onto a Hilbert
  space~$\cK$.  Then since $X^{*\,-1} = X^{-1\,*}$, the operator
  $D=X^{*\,-1}CX^{-1}$ is selfadjoint on $\cK$ and $C=X^*DX$.
\end{proof}

\begin{lemma}\label{LemmaB}
  Let $A\in\lh$ and $ B\in\lk$ be congruent selfadjoint operators on
  \krein spaces $\cH$ and $\cK$, and suppose $A=X^*BX$ where
  $X\in\lhk$ is invertible.
  Then the mapping $$\cX\colon \cM \to X\cM$$
  is a one-to-one and onto correspondence between the set
  $\cC_\pm(\cH,A)$ of all closed $A$-strictly positive/negative
  subspaces of $\cH$ and the set $\cC_\pm(\cK,B)$ of all closed
  $B$-strictly positive/negative subspaces of $\cK$.  Moreover:
  \begin{enumerate}
  \item[(1)] The correspondence $\cX$ preserves dimension:
    $\dim \cM = \dim X\cM$.

  \item[(2)] If vectors $f,g$ in $\cH$ are $A$-orthogonal, their images
    $u=Xf$, $v=Xg$ in $\cK$ are $B$-orthogonal.
  \end{enumerate}
  \end{lemma}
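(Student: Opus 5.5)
The whole argument rests on one identity, which follows from $A=X^*BX$ and the definition of the \krein space adjoint:
\begin{equation*}
  {\ip{Xf}{Xg}}_B = {\ip{BXf}{Xg}}_\cK = {\ip{X^*BXf}{g}}_\cH = {\ip{Af}{g}}_\cH = {\ip{f}{g}}_A, \qquad f,g\in\cH.
\end{equation*}
So $X$ is an isometry from $(\cH,{\ip{\cdot}{\cdot}}_A)$ onto $(\cK,{\ip{\cdot}{\cdot}}_B)$. Assertion (2) is then immediate: if ${\ip{f}{g}}_A=0$, then ${\ip{u}{v}}_B=0$ for $u=Xf$, $v=Xg$. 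Taking $f=g$ shows that $X$ maps $A$-strictly positive (negative) subspaces to $B$-strictly positive (negative) subspaces. Indeed, for $u=Xf\neq 0$ in $X\cM$ we have $f\neq0$, because $X$ is injective, and so ${\ip{u}{u}}_B={\ip{f}{f}}_A$ has the required sign.

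Next I check that $\cX$ is a well-defined bijection $\cC_\pm(\cH,A)\to\cC_\pm(\cK,B)$. Since $X$ is a continuous bijection between Banach spaces (any associated Hilbert norms), its inverse is continuous, either by the open mapping theorem or directly from the paper's convention that invertible means bounded inverse. Hence $X$ is a homeomorphism, and it carries closed subspaces onto closed subspaces. For the inverse map I apply the same reasoning to $X^{-1}$, noting $B = (X^{-1})^*AX^{-1}$ because $X^{*\,-1}=X^{-1\,*}$. This gives a map $\cN\mapsto X^{-1}\cN$ from $\cC_\pm(\cK,B)$ to $\cC_\pm(\cH,A)$, and it is clearly inverse to $\cX$. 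So $\cX$ is one-to-one and onto.

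The remaining point, and the only one needing care, is (1): that dimension in the sense of the paper is preserved. Here $\dim$ of a closed subspace means its Hilbert dimension in an associated Hilbert space, which may be infinite and uncountable. To handle this I fix associated Hilbert spaces $|\cH|$ and $|\cK|$, so that $X|_\cM:\cM\to X\cM$ is a bounded invertible linear map between Hilbert spaces. I then show that Hilbert dimension is invariant under such maps. In finite dimensions this is just linear algebra. In the infinite case I use polar decomposition: $X|_\cM = U|X|_\cM|$ with $|X|_\cM|$ invertible, so $U$ is a unitary from $\cM$ onto $X\cM$, and unitaries preserve orthonormal bases and hence cardinality. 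An alternative route is to compare the density characters, since Hilbert dimension equals density character for infinite-dimensional spaces and homeomorphisms preserve density character. Either argument completes the proof.
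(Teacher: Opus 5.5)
Your proposal is correct and follows the paper's proof. It uses the same identity ${\ip{Xf}{Xg}}_B = {\ip{f}{g}}_A$ to get (2) and sign preservation, obtains closedness and $\dim\cM=\dim X\cM$ by viewing $X$ as a bounded invertible operator between associated Hilbert spaces (your polar decomposition just spells out the paper's ``standard Hilbert space methods''), and gets ontoness from the symmetric argument with $X^{-1}$. One harmless slip: $Xf\neq0\Rightarrow f\neq0$ follows from linearity, not from injectivity.
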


\begin{proof}
  Since $X\in\lhk$, $X\in\cL(|\cH|,|\cK|)$ for any associated Hilbert
  spaces.  Thus by standard Hilbert space methods, if $\cM$ is a
  closed subspace of $\cH$, $X\cM$ is a closed subspace of $\cK$ of
  the same dimension.  For any $f,g\in\cH$ and $u=Xf$, $v=Xg$
  in~$\cK$,
\begin{equation}\label{congruent1} {\ip{f}{g}}_A = {\ip{Af}{g}}_\cH =
  {\ip{BXf}{Xg}}_\cK = {\ip{u}{v}}_B.
\end{equation}
It follows that $\cX$ maps $\cC_\pm(\cH,A)$ into $\cC_\pm(\cK,B)$.
Similarly, $\cX$ is onto.  A straightforward verification shows that
$\cX$ is one-to-one, which proves~(1).  Then (2) follows from
\eqref{congruent1}.
\end{proof}

\begin{definition}\label{defhermindices}
  Let $C$ be a selfadjoint operator on a \krein space~$\cH$.  The {\bf
    positive/negative hermitian indices} of $C$, denoted $h_\pm(C)$,
  are defined as the maximum dimensions of a closed $C$-strictly
  positive/negative subspace of~$\cH$.
\end{definition}

Theorem \ref{indices-exist} shows that hermitian indices are well
defined, that is, the required subspaces of maximum dimension always
exist.  The hermitian indices of the identity operator $1$ on a \krein
space $\cH$ are $h_\pm(1) = \indplusminus \cH$, where
$\indplusminus\cH$ are the positive and negative indices of~$\cH$.
The zero operator has hermitian indices $h_\pm(0)=0$.

\begin{theorem}\label{indices-exist} 
  The hermitian indices $h_\pm(C)$ are well defined for every
  selfadjoint operator $C$ on a \krein space~$\cH$ and are invariant
  under congruence.  If $\cH$ is a Hilbert space, then
  $h_\pm(C)=\dim \cH_\pm$, where $\cH_\pm$ are the spectral subspaces
  of $C$ for the sets $(0,\infty)$ and $(-\infty,0)$.
\end{theorem}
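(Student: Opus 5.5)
By Lemma \ref{LemmaA} and Lemma \ref{LemmaB}, it suffices to treat the Hilbert space case. Lemma \ref{LemmaB} gives a dimension-preserving bijection between $\cC_\pm(\cH,A)$ and $\cC_\pm(\cK,B)$ for congruent $A$ and $B$. So the supremum of dimensions is attained in one space if and only if it is attained in the other, and the two maxima agree. This shows both that $h_\pm(C)$ is well defined for every selfadjoint $C$ on a \krein space and that it is invariant under congruence, once we know the maxima exist when $\cH$ is a Hilbert space. It remains to prove that, for a selfadjoint $C$ on a Hilbert space $\cH$, the maximum dimension of a closed $C$-strictly positive subspace exists and equals $\dim\cH_+$; the negative case follows by replacing $C$ with $-C$.

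\emph{Existence of a subspace of dimension $\dim\cH_+$.} Let $E$ be the spectral measure of $C$, so that $\cH_+=E((0,\infty))\cH$. For $f\neq 0$ in $\cH_+$ we have $\ip{Cf}{f}=\int_{(0,\infty)}t\,d\ip{E(t)f}{f}>0$, because the measure $\ip{E(\cdot)f}{f}$ is nonzero and lives on $(0,\infty)$. Hence $\cH_+$ is itself a closed $C$-strictly positive subspace of dimension $\dim\cH_+$.

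\emph{Maximality.} Let $\cM$ be any closed $C$-strictly positive subspace, and let $P$ be the orthogonal projection onto $\cH_+$. For $f\in\cM$ with $Pf=0$ we have $f\in E((-\infty,0])\cH$, so $\ip{Cf}{f}\le 0$, which forces $f=0$. Thus $P|_\cM$ is an injective bounded operator from $\cM$ into $\cH_+$.

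The main obstacle is that injectivity alone does not immediately bound Hilbert dimension when the range is not closed. We handle it with a standard fact: if $T\colon\cM\to\cN$ is an injective bounded operator between Hilbert spaces, then $\dim\cM\le\dim\cN$. To prove it, note that $T^*$ has dense range in $\cM$. In the infinite-dimensional case, take an orthonormal basis $\{e_j\}$ of $\cN$. The span of $\{T^*e_j\}$ is dense in $\cM$, and a Hilbert space with a dense subset of cardinality at most $\max(\aleph_0,\dim\cN)$ has an orthonormal basis of at most that cardinality. The finite-dimensional case is ordinary linear algebra. Applying this with $T=P|_\cM$ and $\cN=\cH_+$ gives $\dim\cM\le\dim\cH_+$.

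Combining the two steps, the maximum exists and equals $\dim\cH_+$, and the transfer through Lemma \ref{LemmaB} completes the proof for arbitrary \krein spaces.
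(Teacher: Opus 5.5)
Your proof is correct and follows the paper's argument: the Hilbert space case uses $\cH_+$ as a $C$-strictly positive subspace and the injectivity of the orthogonal projection onto $\cH_+$ restricted to any closed $C$-strictly positive subspace, and the passage to \krein spaces and congruence invariance goes through Lemmas \ref{LemmaA} and \ref{LemmaB}. Your one addition is a proof that an injective bounded operator cannot increase Hilbert dimension, which the paper takes for granted; in that step the dense set of the right cardinality should be the span of $\{T^*e_j\}$ with rational coefficients, since the full complex span is uncountable. Alternatively, the polar decomposition $T=V|T|$ gives it at once, because $V$ maps $\overline{\ran T^*}=\cM$ isometrically into $\cN$.
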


\begin{corollary}\label{congruent2sa}
  Every selfadjoint operator on a \krein space is congruent to a
  selfadjoint operator on a Hilbert space having the same hermitian
  indices.
\end{corollary}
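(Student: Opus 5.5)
The corollary follows by combining Lemma~\ref{LemmaA} with the congruence invariance in Theorem~\ref{indices-exist}. Let $C\in\lh$ be selfadjoint on the \krein space~$\cH$. By Lemma~\ref{LemmaA}, there are a Hilbert space~$\cK$, an invertible operator $X\in\lhk$, and a selfadjoint $D\in\lk$ with $C=X^*DX$. So $C$ and $D$ are congruent.

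It remains to show $h_\pm(C)=h_\pm(D)$. Theorem~\ref{indices-exist} asserts exactly that hermitian indices are well defined and invariant under congruence, so the equality follows immediately.

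If one prefers not to quote the invariance clause as a black box, it can be read off from Lemma~\ref{LemmaB}. The map $\cM\mapsto X\cM$ is a bijection from $\cC_\pm(\cH,C)$ onto $\cC_\pm(\cK,D)$ and preserves dimension. Hence the two families of closed strictly positive (respectively negative) subspaces have the same sets of attained dimensions, and therefore the same maxima. The existence of these maxima is the well-definedness part of Theorem~\ref{indices-exist}.

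There is no real obstacle here. The only point needing care is that the maximum dimension exists on both sides, and this is supplied by Theorem~\ref{indices-exist}. On the Hilbert space side, that theorem also identifies $h_\pm(D)$ with the dimensions of the spectral subspaces of $D$ for $(0,\infty)$ and $(-\infty,0)$.
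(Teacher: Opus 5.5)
Your proof is correct and takes the same approach as the paper, which gives no separate proof but obtains the corollary by combining Lemma~\ref{LemmaA} with the congruence invariance in Theorem~\ref{indices-exist}. Your unpacking of that invariance through the dimension-preserving bijection of Lemma~\ref{LemmaB} is also how the paper proves it.
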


\begin{proof}[Proof of Theorem $\ref{indices-exist}$]
  Assume first that $\cH$ is a Hilbert space, and let $E(\cdot)$ be
  the spectral measure for a selfadjoint operator $C$ on~$\cH$.  Set
  \begin{equation*}
    \cH_+ = E((0,\infty)),\quad \cH_- = E((-\infty,0)),
    \quad \cH_0 = E(\{0\}).
  \end{equation*}
  By the spectral theorem, $\cH_\pm$ are $C$-strictly
  positive/negative subspaces of~$\cH$.  Consider any closed
  $C$-strictly positive subspace $\cN$ of~$\cH$, and let
  $T\colon\cN\to\cH_+$ be orthogonal projection onto $\cH_+$.  If
  $f\in\cN$ and $Tf=0$, then $f\in\cH_-\oplus\cH_0$ and hence
  ${\ip{f}{f}}_C \le 0$.  Then $f=0$ because $\cN$ is $C$-strictly
  positive.  Since $T$ is one-to-one, $\dim\cN \le \dim\cH_+$.
  Therefore $\cH_+$ has maximum dimension among all closed
  $C$-strictly positive subspaces of~$\cH$.  The negative case is
  handled similarly.  Hermitian indices are thus well defined for
  selfadjoint operators on a Hilbert space.

  By Lemma \ref{LemmaA}, every selfadjoint operator $C$ on a \krein
  space is congruent to a selfadjoint operator $D$ on a Hilbert space.
  The congruence maps $D$-strictly positive/negative subspaces onto
  $C$-strictly positive/negative subspaces and preserves dimension by
  Lemma \ref{LemmaB}.  Thus hermitian indices are well defined on
  \krein spaces.  Another application of Lemma \ref{LemmaB} shows that
  the indices are invariant under congruence.
\end{proof}

The well-known theorem of J.~J.\ Sylvester that characterizes the
congruence of matrices by their numbers of positive, negative, and
zero eigenvalues has a straightforward generalization to
finite-dimensional \krein
spaces.

\begin{theorem}[Sylvester theorem]\label{sylvesterth}
  Let $A\in\lh$ and $B\in\lk$ be selfadjoint operators on \krein
  spaces $\cH$ and $\cK$ that have the same finite dimension.  Then
  $A$ is congruent to $B$ if and only if $h_\pm(A)=h_\pm(B)$.
\end{theorem}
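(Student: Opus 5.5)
The necessity is immediate from Theorem \ref{indices-exist}: if $A=X^*BX$ with $X\in\lhk$ invertible, then congruence invariance of hermitian indices gives $h_\pm(A)=h_\pm(B)$. The work is in sufficiency, and the plan is to reduce everything to a diagonal normal form on a Hilbert space.

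By Corollary \ref{congruent2sa}, $A$ is congruent to a selfadjoint operator $A_1$ on a Hilbert space $\cH_1$ with the same hermitian indices. Similarly $B$ is congruent to $B_1$ on a Hilbert space $\cK_1$. Since congruence is implemented by invertible operators, $\dim\cH_1=\dim\cH=\dim\cK=\dim\cK_1=n<\infty$. Congruence is an equivalence relation, so it suffices to show that $A_1$ and $B_1$ are congruent.

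For this, diagonalize $A_1$ in an orthonormal eigenbasis $e_1,\dots,e_n$. The eigenvalues are $\lambda_j$, and by Theorem \ref{indices-exist} exactly $p=h_+(A_1)$ of them are positive, $q=h_-(A_1)$ are negative, and $n-p-q$ are zero. Let $S$ be the diagonal operator with $Se_j=|\lambda_j|^{-1/2}e_j$ for $\lambda_j\neq0$ and $Se_j=e_j$ otherwise. Then $S$ is invertible and selfadjoint (the Hilbert space adjoint is the \krein adjoint here), and $S^*A_1S=J_A$, where $J_A$ is diagonal with $p$ entries $1$, $q$ entries $-1$ and $n-p-q$ zeros. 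Reorder the basis so these entries appear in a fixed order: first the $+1$'s, then the $-1$'s, then the zeros. Do the same for $B_1$ with an orthonormal eigenbasis $u_1,\dots,u_n$, obtaining $J_B$ of the same form. Because $h_\pm(A_1)=h_\pm(B_1)$, the counts $p$ and $q$ agree, and so does the number of zeros.

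Let $U\colon\cH_1\to\cK_1$ be the unitary sending the reordered basis $e_j$ to the reordered basis $u_j$. Then $U^*J_BU=J_A$, so $A_1$ is congruent to $J_A$, $J_A$ is congruent to $J_B$, and $J_B$ is congruent to $B_1$. Chaining these congruences with the ones from Corollary \ref{congruent2sa} gives the congruence of $A$ and $B$.

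There is no serious obstacle. The only point needing care is keeping track of adjoints when composing congruences across \krein and Hilbert spaces: one uses $(XY)^*=Y^*X^*$ and the fact that inverses of congruence maps give congruences in the reverse direction, as in the proof of Lemma \ref{LemmaA}.
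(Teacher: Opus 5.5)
Your proposal is correct and follows the paper's proof. You reduce via Corollary~\ref{congruent2sa} to Hilbert space operators $A_1,B_1$, identify $h_\pm$ with eigenvalue counts via Theorem~\ref{indices-exist}, and get necessity from congruence invariance, exactly as the paper does. The only difference is that where the paper cites the classical Sylvester theorem, you prove the needed direction inline: you normalize to a diagonal $\pm1,0$ form and match the eigenbases with a unitary.
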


\begin{proof}
  Assume $h_\pm(A)=h_\pm(B)$.  By Corollary \ref{congruent2sa}, $A$
  and $B$ are congruent to selfadjoint operators $A_1$ and $B_1$ on
  Hilbert spaces $\cH_1$ and $\cK_1$ having the same hermitian indices
  as $A$ and~$B$.  Then $h_\pm(A_1)=h_\pm(B_1)$, and the Hilbert
  spaces $\cH_1$ and $\cK_1$ have the same finite dimension as $\cH$
  and $\cK$.  By Theorem \ref{indices-exist}, $h_\pm(A_1)$ and
  $h_\pm(B_1)$ are the numbers of positive/negative eigenvalues of
  $A_1$ and $B_1$.  Therefore $A_1$ and $B_1$ are congruent by the
  classical theorem of Sylvester \cite[p.\ 223]{HornJohnson1990}, and
  hence $A$ and $B$ are congruent.  The other direction follows from
  Theorem \ref{indices-exist}.
\end{proof}

\goodbreak
\begin{theorem}\label{decomp-exist}
  For every selfadjoint operator $C$ on a \krein space~$\cH$, there
  exist subspaces $\cM_+,\cM_-,\cM_0$ of $\cH$ such that
 \begin{equation}\label{decomp1}
   \cH = \cM_+ + \cM_- + \cM_0,
 \end{equation}
 where $(i)$ $\cM_\pm$ are closed and $C$-strictly positive/negative
 and $\cM_0 = \ker C$; $(ii)$ the sum of any two of the three
 subspaces is closed; $(iii)$ the subspaces are pairwise
 $C$-orthogonal; and $(iv)$ $\dim \cM_\pm = h_\pm(C)$.
 \end{theorem}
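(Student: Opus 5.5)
The plan is to reduce to the Hilbert space case by congruence and then use the spectral theorem. By Lemma \ref{LemmaA}, write $C = X^*DX$, where $X\in\lhk$ is invertible onto a Hilbert space $\cK$ and $D\in\lk$ is selfadjoint. By Theorem \ref{indices-exist}, $h_\pm(C)=h_\pm(D)$.

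For $D$ on the Hilbert space $\cK$, let $E(\cdot)$ be its spectral measure and set $\cK_+=E((0,\infty))\cK$, $\cK_-=E((-\infty,0))\cK$ and $\cK_0=E(\{0\})\cK=\ker D$. These are closed and mutually orthogonal, and $\cK=\cK_+\oplus\cK_-\oplus\cK_0$. Since $D$ commutes with $E$, they are also pairwise $D$-orthogonal: for $u\in\cK_+$ and $v\in\cK_-$ we have $\ip{Du}{v}=\ip{DE_+u}{E_-v}=\ip{E_-DE_+u}{v}=0$, and $Du=0$ for $u\in\cK_0$. By the spectral theorem, $\cK_\pm$ are $D$-strictly positive/negative, as already noted in the proof of Theorem \ref{indices-exist}. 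That proof also shows $\dim\cK_\pm=h_\pm(D)$. Finally, any sum of two of the three subspaces is an orthogonal sum of closed subspaces, namely the range of a spectral projection, and is therefore closed.

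Now set $\cM_\pm=X^{-1}\cK_\pm$ and $\cM_0=X^{-1}\cK_0$, and transport the properties back.

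1. Since $X^{-1}$ is a topological isomorphism of $\cK$ onto $\cH$ (continuous in both directions, relative to associated Hilbert space norms), it maps closed subspaces to closed subspaces. It therefore maps the decomposition $\cK=\cK_++\cK_-+\cK_0$ onto \eqref{decomp1}, and sums of pairs remain closed, because $X^{-1}(\cK_++\cK_-)=\cM_++\cM_-$ and similarly for the other pairs.
2. Lemma \ref{LemmaB}, applied to the congruence $C=X^*DX$ (whose correspondence is the inverse of $\cM\mapsto X\cM$), shows that $\cM_\pm$ are closed $C$-strictly positive/negative subspaces with $\dim\cM_\pm=\dim\cK_\pm=h_\pm(C)$.
3. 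Pairwise $C$-orthogonality follows from identity \eqref{congruent1}: $\ip{f}{g}_C=\ip{Xf}{Xg}_D$.
4. The kernel identification is direct: $Cf=0$ if and only if $X^*DXf=0$, which holds if and only if $DXf=0$ because $X^*$ is invertible. Hence $\ker C=X^{-1}\ker D=\cM_0$.

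No step is a genuine obstacle. The only points needing care are that $\ker C$ corresponds exactly to $\ker D$, which uses the invertibility of $X^*$, and that closedness of the pairwise sums is preserved under $X^{-1}$, which holds because $X^{-1}$ is a homeomorphism. Both are routine.
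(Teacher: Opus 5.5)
Your proposal is correct and is essentially the paper's own proof. The paper writes $D=Y^*CY$ with $Y=X^{-1}$, sets $\cM_\pm=Y\cK_\pm$ and $\cM_0=Y\cK_0$ for the spectral subspaces of $D$, and transfers (i)--(iv) via Lemma \ref{LemmaB}, using $D$-invariance of the spectral subspaces for the $C$-orthogonality.
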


 It follows that the hermitian indices $h_\pm(C)$ coincide with the
 ranks of positivity and negativity of $C$ defined in Bogn{\'a}r
 \cite[pp.\ 95, 149]{Bognar1974}.  For by their definition, the latter
 indices are determined by any fundamental decomposition of $\cH$ in
 the $C$-inner product \cite[pp.\ 24,\ 95]{Bognar1974}, and
 \eqref{decomp1} is one such fundamental decomposition.
 \textcolor{red}{ In this context, the term ``fundamental
   decomposition'' is in the sense of Bogn{\'a}r \cite[p.\
   24]{Bognar1974}.}  See also Azizov and Iokhvidov \cite{aziziokh}
 and Gheondea \cite{Gheondea2022} for analogous notions of fundamental
 decompositions and ranks of positivity and negativity.

\begin{proof}[Proof of Theorem $\ref{decomp-exist}$]
  By Lemma \ref{LemmaA}, we can choose a selfadjoint operator $D$ on a
  Hilbert space $\cK$ and an invertible operator $Y\in\lkh$ such that
  $D=Y^*CY$.  Let $\cK_+, \cK_-, \cK_0$ be the spectral subspaces for
  $D$ for $(0,\infty)$, $(-\infty,0)$, $\{0\}$, so $\cK_\pm$ are
  $D$-strictly positive/negative and $\cK_0=\ker D$.  Then
  \eqref{decomp1} holds with
  \begin{equation*} 
      \cM_+ = Y\cK_+, \quad \cM_- = Y\cK_-, \quad \cM_0 = Y\cK_0.
  \end{equation*}
  Since $\cM_0 =Y\ker D = \ker C$, the subspaces $\cM_+,\cM_-,\cM_0$
  satisfy (i), (ii), (iv) by Lemma \ref{LemmaB}(1) because
  $\cK_+, \cK_-, \cK_0$ have these properties in~$\cK$.  The subspaces
  $\cK_+, \cK_-, \cK_0$ are pairwise orthogonal in the inner product
  of $\cK$.  They are also pairwise $D$-orthogonal by the special fact
  that they are invariant under~$D$.  Hence $\cM_+,\cM_-,\cM_0$
  satisfy (iii) by Lemma \ref{LemmaB}(2).
\end{proof}

 \begin{remark}
   Decompositions of the type $\eqref{decomp1}$ in Theorem
   $\ref{decomp-exist}$ are not unique in general.  The proof of
   Theorem $\ref{decomp-exist}$ shows that when $\cH$ is a Hilbert
   space, a decomposition \eqref{decomp1} can be chosen such that the
   subspaces $\cM_+,\cM_-,\cM_0$ are invariant under~$C$.  This fails
   in general for \krein spaces.
\end{remark}

In Theorem \ref{decomp-exist}, the conditions $(i)-(iii)$ alone imply
$(iv$).  This is shown in the next result, which gives additional
information.

\begin{theorem}\label{decomp-converse}
  Let $C$ be a given selfadjoint operator on a \krein space~$\cH$, and
  let $\cM_+,\cM_-,\cM_0$ be any subspaces of $\cH$ that satisfy
  $\eqref{decomp1}$ and  conditions $(i),(ii),(iii)$ in Theorem
  $\ref{decomp-exist}$.  Then the subspaces also satisfy $(iv)$.
  Moreover:
  \begin{enumerate}
  \item The sum $\eqref{decomp1}$ is direct, that is, if
    $f_+ + f_- + f_0 =0$ with $f_\pm\in\cM_\pm$ and $f_0\in\cM_0$,
    then $f_+=f_-=f_0=0$.

  \item There exist projections $Q_\pm,Q_0$ in $\lh$ such that $Q_+$
    has range $\cM_+$ and kernel $\cM_- + \cM_0$; $Q_-$ has range
    $\cM_-$ and kernel $\cM_+ + \cM_0$; $Q_0$ has range $\cM_0$ and
    kernel $\cM_+ + \cM_-$.
  \end{enumerate}
\end{theorem}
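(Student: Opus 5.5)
I would first establish directness (1), then build the projections (2) from it, and only then deduce $(iv)$.

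\emph{Step 1: directness.} Suppose $f_+ + f_- + f_0 = 0$. Take the $C$-inner product with $f_+$. By $(iii)$, $\ip{f_-}{f_+}_C = 0$, and $\ip{f_0}{f_+}_C = \ip{Cf_0}{f_+}_\cH = 0$ since $f_0\in\ker C$. Hence $\ip{f_+}{f_+}_C = 0$, and strict positivity of $\cM_+$ gives $f_+=0$. The same argument with $f_-$ gives $f_-=0$, and then $f_0=0$.

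\emph{Step 2: projections.} The sum is algebraically direct and each of $\cM_+$, $\cM_-$, $\cM_0$ is closed; $\cM_0=\ker C$ is closed automatically. So, for instance, $\cH = \cM_+ \dotplus (\cM_-+\cM_0)$ is an algebraic direct sum of two closed subspaces, using $(ii)$ for the closedness of $\cM_-+\cM_0$. The projection $Q_+$ onto $\cM_+$ along $\cM_-+\cM_0$ is therefore closed, by the standard argument in an associated Hilbert space $|\cH|$. By the closed graph theorem it is bounded. We get $Q_-$ and $Q_0$ the same way, using closedness of $\cM_++\cM_0$ and $\cM_++\cM_-$. This is essentially bookkeeping.

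\emph{Step 3: $(iv)$.} I would show $\dim\cM_+ = h_+(C)$; the negative case is symmetric. Since $\cM_+$ is closed and $C$-strictly positive, $\dim\cM_+\le h_+(C)$ by Definition~\ref{defhermindices} and Theorem~\ref{indices-exist}. For the reverse inequality, let $\cN$ be any closed $C$-strictly positive subspace, for example one of dimension $h_+(C)$, and consider $T=Q_+|_\cN\colon \cN\to\cM_+$. If $f\in\cN$ and $Tf=0$, then $f = f_- + f_0$ with $f_\pm\in\cM_\pm$, $f_0\in\cM_0$. By $(iii)$ and $f_0\in\ker C$,
\[
\ip{f}{f}_C=\ip{f_-}{f_-}_C\le 0,
\]
so $f=0$. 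Thus $T$ is a bounded injective operator between closed subspaces, viewed as Hilbert spaces in an associated Hilbert space norm.

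The main obstacle is converting injectivity of $T$ into $\dim\cN\le\dim\cM_+$ when the dimensions are infinite and possibly nonseparable. I would use the standard Hilbert space fact, already implicit in the proof of Theorem~\ref{indices-exist}, that a bounded injective operator $T$ between Hilbert spaces satisfies $\dim(\text{domain})\le\dim(\text{codomain})$. To justify it, take the polar decomposition: $\ker T=\{0\}$ makes $|T|$ injective with dense range, so the partial isometry is an isometry of $\cN$ into $\cM_+$, and isometries preserve Hilbert dimension. Hence $h_+(C)=\dim\cN\le\dim\cM_+$, which proves $(iv)$.
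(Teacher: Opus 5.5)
Your proposal is correct and follows the paper's proof essentially step for step. You prove directness from the sign and orthogonality conditions, obtain $Q_\pm,Q_0$ from (ii) and the closed graph theorem, and get (iv) by showing that $Q_+$ restricted to an arbitrary closed $C$-strictly positive subspace $\cN$ is injective. The differences are minor: you get directness by pairing with $f_+$ (using $\cM_+\perp_C\cM_-$), whereas the paper expands $\ip{f_+}{f_+}_C=\ip{f_-+f_0}{f_-+f_0}_C\le 0$, and you supply the polar-decomposition justification for $\dim\cN\le\dim\cM_+$, which the paper leaves implicit.
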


\begin{proof}
  Let $\cM_+,\cM_-,\cM_0$ satisfy $\eqref{decomp1}$ and conditions
  $(i),(ii),(iii)$ in Theorem $\ref{decomp-exist}$.  Suppose
  $f_\pm\in\cM_\pm$, $f_0\in\cM_0$ and $f_+ + f_- + f_0 = 0$.  By $(i)$,
  ${\ip{f_+}{f_+}}_C \ge 0$ with equality only when $f_+=0$;
  ${\ip{f_-}{f_-}}_C \le 0$ with equality only when $f_-=0$; and
  ${\ip{f_0}{f_0}}_C= {\ip{Cf_0}{f_0}}_\cH=0$ because $Cf_0=0$.  Since
  $\cM_- \perp_C \cM_0$ by $(iii)$,
  \begin{equation*}
      {\ip{f_+}{f_+}}_C = {\ip{-f_- - f_0}{-f_- - f_0}}_C
      = {\ip{f_-}{f_-}}_C + {\ip{f_0}{f_0}}_C
      \le 0.
  \end{equation*}
  Therefore ${\ip{f_+}{f_+}}_C = 0$, and hence $f_+=0$.  Then also
  $f_-+f_0=0$, so
  \begin{equation*}
      {\ip{f_-}{f_-}}_C = {\ip{-f_0}{-f_0}}_C = 0,
  \end{equation*}
  and hence $f_-=0$.  So $f_+ = f_- = f_0 = 0$, which proves~(1).

  By $(ii)$ and a closed graph argument, there are projection operators
  $Q_\pm, Q_0$ in $\lh$ such that
   \begin{equation*}
      Q_+f = f_+, \qquad Q_-f = f_-, \qquad Q_0f = f_0,
    \end{equation*}
    whenever $f = f_+ + f_- + f_0$ with $f_\pm\in\cM_\pm$ and
    $f_0\in\cM_0$.  They have the properties listed in (2) by
    construction.

    It remains to show that $\cM_+,\cM_-$ satisfy $(iv)$, that is,
    $\dim\cM_\pm = h_\pm(C)$, or, equivalently, $\cM_\pm$ have maximum
    dimensions among all closed $C$-strictly positive/negative
    subspaces.  Consider any closed $C$-strictly positive subspace
    $\cN$ of $\cH$, and define $T\colon\cN\to\cM_+$ by
    $Tf = Q_+f$, $f\in\cN$.
      Then $T$ is continuous in the strong topology of~$\cH$.  It is
    also one-to-one.  For suppose $f\in\cN$ and $Tf=0$.  Then
    $Q_+f=0$, and hence $f=f_-+f_0$, where $f_-\in\cM_-$ and
    $f_0\in\cM_0$.  Since $\cM_-\perp_C\cM_0$,
    \begin{equation*}
      {\ip{f}{f}}_C ={\ip{f_-}{f_-}}_C + {\ip{f_0}{f_0}}_C \le 0.
    \end{equation*}
    Therefore ${\ip{f}{f}}_C=0$ and hence $f=0$.  It follows that
    $\dim \cN \le \dim \cM_+$, which proves $(iv)$ for the positive
    case. The negative case is proved similarly, and the result
    follows.
  \end{proof}
  
  A {\bf signature operator} on a Hilbert space $\cH$ is an operator
  $J\in\lh$ which is both selfadjoint and unitary.  If $J$ is a
  signature operator on~$\cH$, it coincides with $\pm1$ on orthogonal
  closed subspaces $\cM_\pm$ that span $\cH$, and
  $h_\pm(J)= \dim \cM_\pm$.
 
   \begin{theorem}\label{keyth}
    Let $\cH$ be a Hilbert space, $C\in\lh$ a selfadjoint operator
    such that $\ker C = \{0\}$.  Suppose $C=T^*J_A T$, where $J_A$ is a
    signature operator on a Hilbert space~$\cK$, and $T\in\lhk$ has
    zero kernel and dense range in~$\cK$.  Then
    $h_\pm(C)= h_\pm(J_A)$.
  \end{theorem}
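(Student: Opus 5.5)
The proof will establish the two inequalities $h_+(C)\le h_+(J_A)$ and $h_+(J_A)\le h_+(C)$; the negative case is symmetric (replace $J_A$ by $-J_A$ and $C$ by $-C$). Write $\cK=\cK_+\oplus\cK_-$ for the orthogonal decomposition on which $J_A=\pm1$, with orthogonal projections $P_\pm$. Let $\cH=\cH_+\oplus\cH_-$ be the spectral decomposition of $C$ from Theorem \ref{indices-exist}; here $\cH_0=\{0\}$ because $\ker C=\{0\}$. Throughout I will use the Hilbert space fact already used in the proofs of Theorems \ref{indices-exist} and \ref{decomp-converse}: a one-to-one bounded operator between closed subspaces cannot increase dimension.

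\emph{First inequality.} Let $\cN$ be any closed $C$-strictly positive subspace of $\cH$, and set $S=P_+T|_{\cN}\colon \cN\to\cK_+$. Suppose $f\in\cN$ and $Sf=0$. Then $Tf\in\cK_-$, so
\[
{\ip{f}{f}}_C={\ip{J_ATf}{Tf}}_\cK\le 0 ,
\]
which forces $f=0$. Hence $S$ is one-to-one and $\dim\cN\le\dim\cK_+=h_+(J_A)$. This half uses neither the density of $\ran T$ nor $\ker C=\{0\}$.

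\emph{Reduction for the reverse inequality.} First I normalize $T$. Take the polar decomposition $T=V|T|$. Since $\ker T=\{0\}$ and $\ran T$ is dense, $V$ is a unitary operator from $\cH$ onto $\cK$. Then
\[
C=|T|\,J'\,|T|, \qquad J'=V^*J_AV .
\]
Here $J'$ is a signature operator on $\cH$ with $h_\pm(J')=h_\pm(J_A)$, since unitary equivalence is a congruence and Theorem \ref{indices-exist} applies. So I may assume $\cK=\cH$ and $T=P\ge 0$, with $P$ one-to-one and of dense range.

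\emph{Finite-dimensional part of the reverse inequality.} Let $g_1,\dots,g_n$ be orthonormal in $\cK_+$. The span of the $g_i$ is uniformly $J_A$-positive, and uniform positivity of a finite-dimensional subspace survives small perturbations of a basis. Since $\ran T$ is dense, I can choose $f_i$ with $\|Tf_i-g_i\|$ as small as needed, so that $\operatorname{span}\{Tf_i\}$ is $n$-dimensional and $J_A$-strictly positive. Then $\operatorname{span}\{f_i\}$ is an $n$-dimensional $C$-strictly positive subspace, because ${\ip{f}{f}}_C={\ip{J_ATf}{Tf}}$. Hence $h_+(C)\ge n$. This settles $h_+(J_A)\le h_+(C)$ whenever $h_+(J_A)$ is finite, and shows $h_+(C)$ is infinite when $h_+(J_A)$ is.

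\emph{Main obstacle: the infinite cardinal case.} When $h_+(J_A)=\kappa$ is an infinite cardinal, finite approximation does not produce a closed subspace of dimension $\kappa$. My first attempt is a density-character count. The closure $\cL$ of $T\cH_-$ is a closed $J_A$-nonpositive subspace, being a limit of nonpositive vectors, so it is the graph over $\overline{P_-\cL}$ of a contraction $W$ into $\cK_+$. Density of $\ran T$ gives
\[
\cK_+=\overline{P_+T\cH_+ + P_+\cL}.
\]
I then want to show that the part of $\cK_+$ not reached from $P_+T\cH_+$ is absorbed by $\cL$. Concretely, the plan is to prove that $G=\{g\in\cK_+: T^*g\perp \cH_+\}$, i.e.\ $T^*g\in\cH_-$, is trivial, or at least has dimension at most $\dim\cH_+$. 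For this I would test ${\ip{Cf}{f}}$ on suitable $f$, using the symmetric form $C=PJ'P$ and $\ker C=\{0\}$. Once that holds, $\dim\cK_+\le\dim\cH_+$ follows from the dimension fact applied to $(P_+T|_{\cH_+})^*$.

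If this step resists, the fallback is a transfinite version of the finite-dimensional construction. Take a maximal family of $C$-orthogonal vectors $f_\alpha$ with ${\ip{f_\alpha}{f_\alpha}}_C>0$ whose images $Tf_\alpha$ are nearly orthonormal in $\cK_+$, with summable errors, so that the closed span stays $C$-strictly positive. Then argue, via density of $\ran T$, that maximality forces the family to have cardinality $\kappa$.
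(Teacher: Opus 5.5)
Your proposal has a genuine gap: the uncountable case, which is the whole content of the theorem, is left as a plan, and the plan's concrete target is false.

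\textbf{What works.} Your inequality $h_+(C)\le h_+(J_A)$ is correct. It is the same observation the paper uses: $P_+T$ is one-to-one on a $C$-strictly positive subspace. Your finite-approximation step, combined with it, settles $h_\pm(C)=h_\pm(J_A)$ whenever $h_\pm(J_A)\le\aleph_0$. That is essentially the separable case, which was already known.

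\textbf{Where it fails.} The target of your first attempt, $G=\{g\in\cK_+ : T^*g\in\cH_-\}=\{0\}$, is false in general. Note that $G=\cK_+\ominus\overline{P_+T\cH_+}$, which is nonzero exactly when $\overline{T\cH_+}$ is not maximal $J_A$-nonnegative, and that can happen. Take $\cK_\pm=\ell^2$, $\Gamma=\mathrm{diag}(\sqrt{1-n^{-2}})$ and $v=(1/n)_{n\ge1}$, so that $v\notin\ran(1-\Gamma^*\Gamma)$. Set $\cL_+=\{x+\Gamma x: x\perp v\}$, $\cL_-=\{\Gamma^*y+y: y\in\cK_-\}$, $\cH=\cL_+\oplus\cL_-$ and $T(h_+,h_-)=h_++h_-$. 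Then $T$ is one-to-one with dense range, and $C=T^*J_AT=C_+\oplus C_-$ has $\ker C=\{0\}$ and spectral subspaces $\cH_\pm=\cL_\pm$. Yet $G=\mathrm{span}\{v\}$.

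Your weaker goal $\dim G\le\dim\cH_+$ is as hard as the theorem itself, and testing ${\ip{Cf}{f}}$ gives no handle on it. The transfinite fallback also fails. An uncountable family of positive errors cannot be summable. The closure of a $C$-strictly positive span need not be $C$-strictly positive. And nothing ties maximality of the family to the cardinal $\kappa$.

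\textbf{The missing idea.} You need a mechanism by which the negative part $T\cH_-$ absorbs the defect $G$. The paper does this with Phillips' theorem. It places $T\cH_+$ and $T\cH_-$ inside a maximal dual pair $\{x+Gx\}_{x\in\cK_+}$, $\{G^*y+y\}_{y\in\cK_-}$ given by a single contraction $G$. Take an approximation $Tf_n\to h\in\cK_+$ and combine its $P_+$- and $P_-$-components. This gives $(1-G^*G)f_n^+\to h$ with $f_n^+\in P_+T\cH_+$. Hence $1-G^*G$ maps $\overline{P_+T\cH_+}$ onto a dense subspace of $\cK_+$, even though $P_+T\cH_+$ itself need not be dense, and $\dim\cK_+\le\dim\cH_+$ follows.

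Your own setup can be finished the same way with less machinery. Extend $W$ by $0$ on $\cK_-\ominus P_-\cL$ to a contraction $\widetilde W$, and put $\mathcal{P}=\{x+\widetilde W^*x: x\in\cK_+\}$. Then:
\begin{itemize}
\item $\mathcal{P}$ is $J_A$-orthogonal to $\cL\supseteq T\cH_-$.
\item $P_+$ maps $\mathcal{P}$ boundedly and invertibly onto $\cK_+$.
\item $w\mapsto P_{\overline{T\cH_+}}J_Aw$ is one-to-one on $\mathcal{P}$. If it kills $w$, then $J_Aw\perp T\cH_+ + T\cH_- = T\cH$, which is dense. This is where $\cH=\cH_+\oplus\cH_-$, i.e.\ $\ker C=\{0\}$, enters.
\end{itemize}
Hence $\dim\cK_+=\dim\mathcal{P}\le\dim\overline{T\cH_+}\le\dim\cH_+$.
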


  The assumption that $T$ has zero kernel is redundant since
  $\ker C = \{0\}$ but retained for convenience in the proof.  We give
  a \krein space proof based on a theorem of R.~S. Phillips
  \cite{Phillips1961} in the form given in Arsene and Gheondea
  \cite{AG1982}.

  \begin{proof}
    Let $\cA$ be the \krein space which is $\cK$ as a vector space and
    has $J_A$ as fundamental symmetry.  Then
    ${\ip{f}{g}}_{\cA} = {\ip{J_Af}{g}}_{\cK}$, $f,g\in\cA$.
 Suppose $J_A=\pm1$ on the subspaces $\cA_\pm$ of $\cK$.  Then
 $h_\pm(J_A)=\dim\cA_\pm$ and $\cA$ has fundamental decomposition
 $\cA=\cA_+\oplus_\cA\cA_-$.  The corresponding associated Hilbert
 space $|\cA|$ coincides with the Hilbert space~$\cK$.  In particular,
 $\cA$ and $\cK$ have the same strong topology.

 Since $\ker C= \{0\}$, we can write $\cH=\cH_+\oplus_\cH\cH_-$, where
 $\cH_\pm$ are the spectral subspaces for $C$ for the positive and
 negative real axes, and then $h_\pm(C)=\dim\cH_\pm$ by Theorem
 \ref{indices-exist}.  Let $J_C$ be the signature operator on $\cH$
 such that $J_C=\pm1$ on $\cH_\pm$, and let $|C| = (C^2)^\frac12$ be
 the operator modulus of~$C$.  Then
 \begin{equation}\label{keyfact1}
 |C|^\frac12 J_C  |C|^{\frac12} = C = T^*J_AT
 \end{equation}
and 
 \begin{equation}\label{keyfact2}
 |C|^\frac12 \cH_\pm\subseteq\cH_\pm.
 \end{equation}
 The problem is to show that $\dim \cH_\pm = \dim\cA_\pm$.

 Define subspaces $\cG$ and $\cG_\pm$ of $\cA$ by $\cG=T\cH$ and
 $\cG_\pm=T\cH_\pm$.  Then $\cG = \cG_+ + \cG_-$.  Since $T$ has dense
 range in $\cK$, $\cG$ is dense in~$\cA$ because $\cA$ and $\cK$ have
 the same strong topology.  The subspaces $\cG_\pm$ are
 nonnegative/nonpositive and orthogonal in~$\cA$.  For suppose
 $g_\pm\in\cG_\pm$ with $g_\pm = Tf_\pm$, $f_\pm\in\cH_\pm$, then by
 \eqref{keyfact1} and \eqref{keyfact2},
 \begin{equation*} {\ip{g_\pm}{g_\pm}}_\cA = {\ip{J_A
       Tf_\pm}{Tf_\pm}}_\cK = {\ip{J_C |C|^\frac12 f_\pm}{|C|^\frac12
       f_\pm}}_\cH,
  \end{equation*}
  where the last term on the right side is nonnegative/nonpositive.
  Similarly,
  \begin{equation*} {\ip{g_+}{g_-}}_\cA = {\ip{J_A Tf_+}{Tf_-}}_\cK =
    {\ip{J_C |C|^\frac12f _+}{|C|^\frac12 f_-}}_\cH = 0,
  \end{equation*}
 which shows orthogonality.

  Let 
  \begin{equation}\label{mar12a2024}
    \cG_+ = \{x + G_+ x\}_{x\in\cM_+},
    \qquad
    \cG_- = \{G_- y + y\}_{y\in \cM_-},
  \end{equation}
  be the graph representations of $\cG_\pm$ relative to the
  fundamental decomposition $\cA = \cA_+\oplus_\cA \cA_-$ of~$\cA$.
  Thus $G_+\colon \cM_+\to|\cA_-|$ is a contraction from a subspace
  $\cM_+$ of $\cA_+$ into $|\cA_-|$, and $G_-\colon \cM_-\to\cA_+$ is
  a contraction from a subspace $\cM_-$ of $|\cA_-|$ into $\cA_+$.
  Let $P_\pm\in\cL(\cA)$ be the selfadjoint projections onto
  $\cA_\pm$.  The subspace $\cM_+$ in \eqref{mar12a2024} is the
  one-to-one image of $\cH_+$ under $P_+T$ ($T$ is one-to-one by
  assumption, and the restriction of $P_+$ to $\cG_+$ is one-to-one by
  the graph representation of $\cG_+$), and hence
  \begin{equation}\label{dimA}
    \dim \cH_+ = \dim {\overline{\cM}}_+ \le \dim\cA_+.
  \end{equation}
  Similarly, $\cM_-$ is the one-to-one image of $\cH_-$ under $P_-T$,
  and hence
  \begin{equation}\label{dimB}
    \dim \cH_- = \dim {\overline{\cM}}_- \le \dim\cA_-.
  \end{equation}
  
  Since $\cG_+$ is nonnegative, $\cG_-$ nonpositive, and
  $\cG_+ \perp_\cA \cG_-$, by Phillips' Theorem \cite{Phillips1961,
    AG1982}, there is a maximal nonnegative subspace $\widetilde\cG_+$
  and a maximal nonpositive subspace $\widetilde\cG_-$ such that
  $ \cG_\pm\subseteq\widetilde\cG_\pm$ and
  $\widetilde\cG_+ \perp_\cA \widetilde\cG_-$.  Then
  \begin{equation*}
    \widetilde\cG_+ = \{x + G x\}_{x\in\cA_+},
    \qquad
    \widetilde\cG_- = \{G^* y + y\}_{y\in |\cA_-|},
  \end{equation*}
  where $G\in\cL(\cA_+,|\cA_-|)$ is a contraction operator.  
  The inclusions $\cG_\pm\subseteq\widetilde\cG_\pm$ mean that
  $G_+ = G\big\vert\cM_+$ and $G_- = G^*\big\vert\cM_-$, and hence
  \eqref{mar12a2024} takes the form
  \begin{equation}\label{mar12b2024}
    \cG_+ = \{x + G x\}_{x\in\cM_+},
    \qquad
    \cG_- = \{G^* y + y\}_{y\in \cM_-},
  \end{equation}

  To complete the proof, consider any $h\in\cA$.  Since
  $\cG = \cG_++\cG_-$ is dense in $\cA$, by \eqref{mar12b2024} there
  exist sequences $\{f_n^\pm\}_{n=1}^\infty\subseteq\cM_\pm$ such that
  \begin{equation}\label{mar12c2024}
    \lim_{n\to\infty} \big[ f_n^+ + Gf_n^+ + G^*f_n^- + f_n^- 
    \big] = h.
  \end{equation}
  If $h\in\cA_+$, we can apply $P_\pm$ to both sides of
  \eqref{mar12c2024} to get $f_n^++G^*f_n^-\to h$ and
  $Gf_n^+ + f_n^-\to 0$, and hence
  \begin{equation*}
    (1-G^*G)f_n^+ = f_n^++G^*f_n^- - G^*(Gf_n^+ + f_n^-) \to h.
  \end{equation*}
  It follows that $1-G^*G$ maps $\overline{\cM}_+$ onto a dense
  subspace of $\cA_+$.  Therefore
   \begin{equation*}
    \dim \cA_+ \le \dim \overline{\cM}_+ = \dim\cH_+.
  \end{equation*}
  The reverse inequality was proved above, so
  $\dim \cA_+ = \dim\cH_+$.  If $h\in\cA_-$, then from
  \eqref{mar12c2024} we get $f_n^+ + G^*f_n^-\to 0$ and
  $Gf_n^+ + f_n^-\to h$, and then
  \begin{equation*}
    (1-GG^*)f_n^- =
    Gf_n^+ + f_n^- - G(f_n^+ + G^*f_n^-) \to h.
  \end{equation*}
  Therefore $1-GG^*$ maps
  $\overline{\cM}_-$ onto a dense subspace of $\cA_-$, and
  \begin{equation*}
    \dim \cA_- \le \dim \overline{\cM}_- = \dim \cH_-.
  \end{equation*}
  Thus $\dim \cA_- = \dim\cH_-$.  We have shown that
  $\dim \cA_\pm = \dim\cH_\pm$, and the result follows.
\end{proof}

\section{Bogn{\'a}r-Kr{\'a}mli factorization }\label{Sect3}

\textcolor{black}{Following terminology introduced in \cite{DRFields},
  by a {\bf Bogn{\'a}r-Kr{\'a}mli factorization} of a selfadjoint
  operator $C$ on a \krein space $\cH$ we mean any representation
  $C=AA^*$ such that $A\in\lah$ for some \krein space $\cA$ and
  $\ker A = \{0\}$.  In the viewpoint of Constantinescu and Gheondea
  \cite{CG2} and Gheondea \cite[Ch.\ 6]{Gheondea2022}, $\cA$ is an
  {\bf induced \krein space}.  Another equivalent notion is that of
  {\bf continuously contained \krein spaces}, which are discussed
  below.  }

Bogn{\'a}r-Kr{\'a}mli factorizations of a selfadjoint operator $C$ are
far from unique.  In a special case, any two are identical modulo an
isomorphism between the external domain spaces, and then the
factorization is said to be {\bf essentially unique}.  This occurs if
and only if $C$ is congruent to a selfadjoint operator on a Hilbert
space whose spectrum omits an interval $(0,\varepsilon)$ or
$(-\varepsilon,0)$ for some $\varepsilon>0$.  For details and other
conditions, see Hara \cite{Hara1992}, Dritschel \cite{MAD1993},
Constantinescu and Gheondea \cite{CG1, CG2}, and Gheondea \cite[Ch.\
6]{Gheondea2022}.  The indices of the \krein space $\cA$ in any
Bogn{\'a}r-Kr{\'a}mli factorization are nevertheless unique and
determined by the selfadjoint operator $C$.

\begin{theorem}\label{mainth}
  A selfadjoint operator $C$ on a \krein space $\cH$ admits a
  Bogn{\'a}r-Kr{\'a}mli factorization 
  \begin{equation}\label{bk-1}
  C=AA^*, \quad A\in\lah, \quad \ker A = \{0\}
  \end{equation}
  for some \krein space $\cA$ if and only if\/
  $\indplusminus \cA = h_\pm(C)$.
\end{theorem}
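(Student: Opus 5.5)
The statement has two parts. First, a factorization \eqref{bk-1} exists. Second, for every such factorization $\operatorname{ind}_\pm\cA = h_\pm(C)$. I would prove both by reducing to a Hilbert space via congruence. The second part then becomes exactly the situation of Theorem \ref{keyth}.

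\emph{Reduction.} By Lemma \ref{LemmaA} choose a Hilbert space $\cK$ and an invertible $Y\in\lkh$ with $D=Y^*CY$ selfadjoint on $\cK$. By Theorem \ref{indices-exist}, $h_\pm(D)=h_\pm(C)$. The key observation is that factorizations transfer along the congruence:
\[
C=AA^* \iff D=BB^*, \qquad B=Y^*A\in\cL(\cA,\cK).
\]
Moreover $\ker B=\ker A$ because $Y^*$ is invertible, and conversely $A=Y^{*\,-1}B$. So it suffices to treat $D$ on the Hilbert space $\cK$.

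\emph{Existence.} Let $\cK_0=\ker D$ and $\cK'=\cK\ominus\cK_0$. Let $J$ be the signature operator on $\cK'$ equal to $\pm1$ on the spectral subspaces of $D$ for $(0,\infty)$ and $(-\infty,0)$. Let $\cA$ be the \krein space $\cK'$ with fundamental symmetry $J$, and define $B\colon\cA\to\cK$ by $B=|D|^{1/2}\vert_{\cK'}$. This $B$ is injective, since $\ker|D|^{1/2}=\ker D$. Its \krein adjoint is $B^*=J|D|^{1/2}$ viewed as a map into $\cK'$. Hence
\[
BB^*=|D|^{1/2}J|D|^{1/2}=D.
\]
Its indices are $\operatorname{ind}_\pm\cA=h_\pm(D)=h_\pm(C)$, by Theorem \ref{indices-exist}.

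\emph{Uniqueness of indices.} Given $D=BB^*$ with $\ker B=\{0\}$, fix a fundamental symmetry $J_A$ of $\cA$ and its associated Hilbert space $|\cA|$. Write $B^\dagger$ for the Hilbert space adjoint. Then $B^*=J_AB^\dagger$, so with $T=B^\dagger\in\cL(\cK,|\cA|)$ we get
\[
D=T^\dagger J_A T.
\]
$T$ has dense range because $\ker B=\{0\}$. Also $\ker D=\ker T$: since $B$ and $J_A$ are injective, $Dx=0$ forces $J_ATx=0$, hence $Tx=0$.

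Restrict to $\cK'=\cK\ominus\ker D$, which reduces $D$. Then $D'=D\vert_{\cK'}$ has zero kernel, $T'=T\vert_{\cK'}$ is injective, and $T'$ has the same range as $T$, hence dense range. Moreover $D'=T'^\dagger J_AT'$. Theorem \ref{keyth} now gives $h_\pm(D')=h_\pm(J_A)=\operatorname{ind}_\pm\cA$. Finally, $h_\pm(D')=h_\pm(D)$, since the spectral subspaces of $D$ for $(0,\infty)$ and $(-\infty,0)$ lie in $\cK'$. Combining with $h_\pm(D)=h_\pm(C)$ yields $\operatorname{ind}_\pm\cA=h_\pm(C)$.

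The main obstacle is matching the hypotheses of Theorem \ref{keyth}, namely the zero kernel and the density of the range, together with the bookkeeping of Hilbert versus \krein adjoints. The step that makes this work is the identity $\ker D=\ker T$, which lets us split off $\ker D$ without losing density of the range.
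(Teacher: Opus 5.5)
Your proof is correct and follows the paper's route. You reduce by congruence to a Hilbert space, write the factorization as $D=T^\dagger J_AT$ with $T=B^\dagger$, split off $\ker D$, apply Theorem \ref{keyth}, and for sufficiency factor through $|D|^{1/2}$ and the spectral signature operator. Your restriction $T\vert_{\cK'}$ is the Hilbert adjoint of the paper's compression $E^*B$, and your identity $\ker D=\ker T$ plays the role of the paper's observation $\ran B\subseteq\cH_1$.

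One small addition is needed. The ``if'' direction concerns an arbitrary given \krein space $\cA$ with $\indplusminus\cA=h_\pm(C)$, not only the space $\cA_0$ you construct. So note that such an $\cA$ is isomorphic to $\cA_0$ via a Kre{\u\i}n-space unitary $U\colon\cA\to\cA_0$, obtained by matching orthonormal bases of the positive and negative parts. Then, if $A_0\in\cL(\cA_0,\cH)$ is your factorization, $A=A_0U$ satisfies $AA^*=A_0UU^*A_0^*=C$ and $\ker A=\{0\}$.
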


\textcolor{black}{ By Theorem \ref{mainth}, the standing assumption of
  separability in \cite[p.\ 144]{DRFields} can be removed.}

 \textcolor{red}{The sufficiency part in Theorem \ref{mainth} is a
   known variant of \cite[Th.\ 1]{BognarKramli}.}
 Proofs can be found in \cite{CG1, DR1990, DRFields, Gheondea2022},
 and a proof is included here for completeness.

 \textcolor{red}{A proof of necessity is given in \cite{DR1990} for
   separable \krein spaces.  The proof presented here is new and based
   on Theorem \ref{keyth}.  Its key feature is that it does not
   require separability.}
    
    \begin{proof}[Proof of Theorem $\ref{mainth}$]
   Necessity.  Assume given a \krein space $\cA$ and factorization
   $C=AA^*$ with $A\in\lah$ and $\ker A = \{0\}$.  We must show that
   $\indplusminus \cA = h_\pm(C)$.  Without loss of generality, we can
   assume that $\cH$ is a Hilbert space.  For suppose the conclusion
   holds in this case.  For the general case when $\cH$ is a \krein
   space, by Corollary \ref{congruent2sa}, $C$ is congruent to a
   selfadjoint operator $D$ on a Hilbert space~$\cK$ such that
   $h_\pm(D) = h_\pm(C)$.  Suppose $C=X^*DX$, where $X\in\lhk$ is
   invertible.  Since $X^{*\,-1} = X^{-1\,*}$,
  \begin{equation*}
    D = X^{*\ -1}CX^{-1} = X^{-1\,*}AA^*X^{-1} = BB^*,
  \end{equation*}
  where $B=X^{-1\,*}A\in\lak$ and $\ker B = \{0\}$.  Since $\cK$ is a
  Hilbert space, $h_\pm(D) = \indplusminus \cA$ by the special case,
  and hence $h_\pm(C) = \indplusminus \cA$.

  Thus we can assume that $\cH$ is a
  Hilbert space.  We can further assume that $\ker C = \{0\}$.  For
  suppose the conclusion is known in this case.  Consider a
  selfadjoint operator $C$ on a Hilbert space $\cH$ with possibly
  $\ker C \neq\{0\}$.  Let $\cB$ be a \krein space, $B\in\lbh$,
  $\ker B = \{0\}$, and $C=BB^*$.  By the spectral theorem, we can
  write $\cH=\cH_1\oplus\cH_0$ and $\cH_1 = \cH_+\oplus\cH_-$, where
  $\cH_\pm$ and $\cH_0 = \ker C$ are the spectral subspaces for $C$
  for $(0,\infty)$, $(-\infty,0)$, and $\{0\}$.  Notice that
  \begin{equation*}
    B(B^*\cH) = C\cH \subseteq \cH_1,
  \end{equation*}
  that is, $B$ maps the dense subspace $B^*\cH$ of $\cB$ into $\cH_1$.
  Therefore $\ran B\subseteq\cH_1$.  Set $C_1 = E^*CE = B_1B_1^*$,
  where $E\in\cL(\cH_1,\cH)$ is the natural embedding of $\cH_1$ into
  $\cH$, and $B_1 = E^*B\in\cL(\cB,\cH_1)$.  Then $E^*$ is orthogonal
  projection from $\cH$ onto $\cH_1$, and $\ker B_1=\{0\}$ because
  $\ran B\subseteq\cH_1$ and $E^*$ is the identity on $\cH_1$.
  Moreover, $C_1=C\big\vert\cH_1$ and hence $\ker C_1 = \{0\}$.  Since
  we grant the result in the zero kernel case,
  $\indplusminus\cB = h_\pm(C_1)$.  Then noting that
  $h_\pm(C_1) = \dim \cH_\pm = h_\pm(C)$, we obtain
  $\indplusminus \cB = h_\pm(C)$, as was to be shown.

  For the remaining case, let $\cH$ be a Hilbert space, and assume
  $\ker C = \{0\}$.  Let $\cA$ be a \krein space and $A\in\lah$ an
  operator such that, $\ker \cA = \{0\}$, and $C=AA^*$.  Choose a
  fundamental symmetry $J_\cA$ for $\cA$ and corresponding associated
  Hilbert space $|\cA|$.  Then $C=AJ_A A^\times$, where
  $A^\times\in\cL(\cH,|\cA|)$ is the adjoint of $A$ viewed as an
  operator in $\cL(|\cA|,\cH)$.  Since $\ker A = \{0\}$, $A^\times$
  has dense range.  Since $\ker C = \{0\}$, $A^\times$ has zero
  kernel.  Hence by Theorem \ref{keyth} applied with
  $T = A^\times \in\cL(\cH,|\cA|)$, we obtain
  $h_\pm(C) = h_\pm(J_\cA) = \indplusminus \cA$, as was to be shown.

   Sufficiency.  Suppose first that $\cH$ is a Hilbert space and $\cA$
   is a \krein space such that $\indplusminus \cA = h_\pm(C)$.  By
   Theorem \ref{keyfact2}, $h_\pm(C) = \dim \cH_\pm$, where $\cH_\pm$
   are the spectral subspaces for $C$ for the intervals $(0,\infty)$
   and $(-\infty,0)$.  Since $\cA$ can be replaced by an isomorphic
   copy and $h_\pm(C) = \dim \cH_\pm$, we are free to replace $\cA$ by
   $\cA = \cH_+ + \cH_-$ with fundamental symmetry
   $J_\cA(x_+ + x_-) = x_+ - x_-$, $x_\pm\in\cH_\pm$, and inner
   product
\begin{equation*}
 {\ip{x}{y}}_\cA =  {\ip{J_\cA x}{y}}_\cH , \qquad x,y\in\cA.  
\end{equation*}
Define $A\in\lah$ by $Ax = |C|^\frac12 x$ for all $x\in\cA$.  Then
$\ker A = \{0\}$ and $AA^*h = |C|^\frac12 J_\cA |C|^\frac12 h = Ch$
for all $h\in\cH$.  Thus $C=AA^*$, where $A\in\lah$ and
$\ker A = \{0\}$, as required.  The general case of sufficiency, in
which $\cH$ is a \krein space, can be reduced to the Hilbert space
case using Corollary \ref{congruent2sa}, in a similar manner to an
argument given above.
\end{proof}

   A \krein space $\cA$ is {\bf continuously contained} in a \krein
   space $\cH$ if $\cA$ is a linear subspace of $\cH$ and the
   inclusion mapping $A$ from $\cA$ into $\cH$ is continuous.  

   \begin{theorem}\label{mainth2}
     If $\cA$ is a \krein space which is continuously contained in a 
     \krein space $\cH$ with inclusion mapping~$A$, then 
     $\indplusminus \cA = h_\pm(C)$, where
     $C=AA^*$.  
     Every selfadjoint operator $C$ on $\cH$ arises in this way.
    \end{theorem}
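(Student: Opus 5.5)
The plan is to read Theorem \ref{mainth2} as a restatement of Theorem \ref{mainth} in the language of continuously contained spaces. There are two parts: the index formula, and the claim that every selfadjoint $C$ arises this way.

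\emph{First part.} Suppose $\cA$ is continuously contained in $\cH$ with inclusion map $A$. Then $A\in\lah$, and $\ker A=\{0\}$ because an inclusion map is injective. Set $C=AA^*$. This operator is selfadjoint on $\cH$, and $C=AA^*$ is a Bogn{\'a}r-Kr{\'a}mli factorization of it in the sense of \eqref{bk-1}. The necessity direction of Theorem \ref{mainth} then gives $\indplusminus\cA=h_\pm(C)$ directly. Nothing else is needed for this part.

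\emph{Second part.} Let $C$ be any selfadjoint operator on $\cH$. I would proceed in three steps.

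\begin{enumerate}
\item Take a \krein space $\cA_0$ with $\indplusminus\cA_0=h_\pm(C)$. For example, let $\cA_0$ be the orthogonal direct sum of a Hilbert space of dimension $h_+(C)$ and the antispace of a Hilbert space of dimension $h_-(C)$. The sufficiency direction of Theorem \ref{mainth} then gives $A_0\in\cL(\cA_0,\cH)$ with $\ker A_0=\{0\}$ and $C=A_0A_0^*$.

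\item Transport the structure of $\cA_0$ to its range. Let $\cA=\ran A_0\subseteq\cH$, and define
\[
{\ip{A_0x}{A_0y}}_\cA={\ip{x}{y}}_{\cA_0},\qquad x,y\in\cA_0 .
\]
Since $A_0$ is injective, this is well defined and $A_0$ becomes an isomorphism of \krein spaces from $\cA_0$ onto $\cA$. In particular $\cA$ is a \krein space: a fundamental decomposition of $\cA_0$ is carried to one of $\cA$.

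\item Check continuity and identify the inclusion. The inclusion map $A\colon\cA\to\cH$ is $A=A_0U^{-1}$, where $U\colon\cA_0\to\cA$ is the isomorphism $x\mapsto A_0x$. Hence $A$ is continuous, and $\cA$ is continuously contained in $\cH$. Since $U$ is unitary, $U^{-1}=U^*$, and so
\[
AA^*=A_0U^{-1}(U^{-1})^*A_0^*=A_0A_0^*=C .
\]
\end{enumerate}

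The only step needing care is the verification in steps 2 and 3 that $U$ is a \krein space unitary, so that adjoints transform correctly. The point to check is that $U$ preserves the inner product and is bijective, hence is a continuous isomorphism whose adjoint is its inverse. This is routine once the strong topology on $\cA$ is taken to be the one transported from $\cA_0$.
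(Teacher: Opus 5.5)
Your proposal is correct and follows the paper's proof. The first part applies the necessity half of Theorem \ref{mainth} to the injective inclusion map, and the second part factors $C=A_0A_0^*$ by the sufficiency half, then gives $\ran A_0$ the inner product that makes $A_0$ an isomorphism, exactly as the paper does; your check that $AA^*=A_0U^{-1}(U^{-1})^*A_0^*=A_0A_0^*$ just fills in the detail the paper leaves implicit.
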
    
   
   \begin{proof}
   The first statement is a special case of Theorem \ref{mainth}.  For the 
   second, first factor the given selfadjoint operator $C$ in the form
   $C=AA^*$, where  $A\in\lah$ for some \krein
     space $\cA$ and $\ker A = \{0\}$,  Then replace $\cA$ by 
   the range of $A$ in the inner product that makes $A$
     an isomorphism.  
   \end{proof}
   
   Heinz Langer took an early interest in continuously contained
   \krein spaces and de~Branges' theory \cite{deB1988} of
   complementation of contractively contained spaces (1988 private
   communication to Dritschel and Rovnyak).  He pursued this topic to
   great depth in joint work with Branko {\'C}urgus \cite{CL1990,
     CL2003}, showing novel interactions between de~Branges' theory,
   definitizable selfadjoint operators, and the problem of \krein
   space completions of nondegenerate inner product spaces.  Details
   are beyond the present scope, but a re-examination of these topics
   could well shed further light on this area.
      
\medskip
\textcolor{black}{
  \noindent {\bf Funding:} Alejandra Maestripieri was partially
  supported by the Air Force Office of Scientific Research (USA) grant
  FA9550-24-1-0433.}

\def\cprime{$'$} \def\cprime{$'$}
\providecommand{\bysame}{\leavevmode\hbox to3em{\hrulefill}\thinspace}
\providecommand{\MR}{\relax\ifhmode\unskip\space\fi MR }
\providecommand{\MRhref}[2]{%
  \href{http://www.ams.org/mathscinet-getitem?mr=#1}{#2}
}
\providecommand{\href}[2]{#2}

        

\begin{thebibliography}{10}

\bibitem{AG1982}
Gr. Arsene and A.~Gheondea, \emph{Completing matrix contractions}, J. Operator
  Theory \textbf{7} (1982), no.~1, 179--189. \MR{650203}

\bibitem{aziziokh}
T.~Ya. Azizov and I.~S. Iokhvidov, \emph{Linear operators in spaces with an
  indefinite metric}, Pure and Applied Mathematics (New York), John Wiley \&
  Sons, Ltd., Chichester, 1989, Translated from the Russian by E. R. Dawson
  (``Nauka'', Moscow, 1986; MR0863885), A Wiley-Interscience Publication.
  \MR{1033489}

\bibitem{Bognar1974}
J.~Bogn\'ar, \emph{Indefinite inner product spaces}, Ergebnisse der Mathematik
  und ihrer Grenzgebiete, vol. Band 78, Springer-Verlag, New York-Heidelberg,
  1974. \MR{467261}

\bibitem{BognarKramli}
J.~Bogn{\'a}r and A.~Kr{\'a}mli, \emph{Operators of the form {$C^{\ast} C$} in
  indefinite inner product spaces}, Acta Sci. Math. (Szeged) \textbf{29}
  (1968), 19--29. \MR{0240658}

\bibitem{CG1}
T.~Constantinescu and A.~Gheondea, \emph{Elementary rotations of linear
  operators in {K}re{\u\i}n spaces}, J. Operator Theory \textbf{29} (1993),
  no.~1, 167--203. \MR{1277972}

\bibitem{CG2}
\bysame, \emph{Representations of {H}ermitian kernels by means of {K}re\u\i n
  spaces}, Publ. Res. Inst. Math. Sci. \textbf{33} (1997), no.~6, 917--951.
  \MR{1614572}

\bibitem{CL1990}
B.~{\'C}urgus and H.~Langer, \emph{On a theorem of de~{B}ranges}, 1990.

\bibitem{CL2003}
\bysame, \emph{Continuous embeddings, completions and complementation in
  {K}rein spaces}, Rad. Mat. \textbf{12} (2003), no.~1, 37--79. \MR{2022245}

\bibitem{deB1988}
L.~de~Branges, \emph{Complementation in {K}re\u in spaces}, Trans. Amer. Math.
  Soc. \textbf{305} (1988), no.~1, 277--291. \MR{920159}

\bibitem{MAD1993}
M.~A. Dritschel, \emph{The essential uniqueness property for operators on
  {K}re\u in spaces}, J. Funct. Anal. \textbf{118} (1993), no.~1, 198--248.
  \MR{1245603}

\bibitem{DR1990}
M.~A. Dritschel and J.~Rovnyak, \emph{Extension theorems for contraction
  operators on {K}re\u\i n spaces}, Extension and interpolation of linear
  operators and matrix functions, Oper. Theory Adv. Appl., vol.~47,
  Birkh\"auser, Basel, 1990, pp.~221--305. \MR{1120277}

\bibitem{DRFields}
\bysame, \emph{Operators on indefinite inner product spaces}, Lectures on
  operator theory and its applications ({W}aterloo, {ON}, 1994), Fields Inst.
  Monogr., vol.~3, Amer. Math. Soc., Providence, RI, 1996, pp.~141--232.
  \MR{1364446}

\bibitem{Gheondea2022}
A.~Gheondea, \emph{An indefinite excursion in operator theory---geometric and
  spectral treks in {K}re\u\i n spaces}, London Mathematical Society Lecture
  Note Series, vol. 476, Cambridge University Press, Cambridge, 2022.
  \MR{4439693}

\bibitem{Hara1992}
T.~Hara, \emph{Operator inequalities and construction of {K}re\u\i n spaces},
  Integral Equations Operator Theory \textbf{15} (1992), no.~4, 551--567.
  \MR{1166707}

\bibitem{HornJohnson1990}
R.~A. Horn and C.~R. Johnson, \emph{Matrix analysis}, Cambridge University
  Press, Cambridge, 1990, Corrected reprint of the 1985 original. \MR{1084815}

\bibitem{Phillips1961}
R.~S. Phillips, \emph{The extension of dual subspaces invariant under an
  algebra}, Proc. {I}nternat. {S}ympos. {L}inear {S}paces ({J}erusalem, 1960),
  Jerusalem Academic Press, Jerusalem, 1961, pp.~366--398. \MR{133686}

\bibitem{Potapov1955}
V.~P. Potapov, \emph{The multiplicative structure of {$J$}-contractive matrix
  functions}, Trudy Moskov. Mat. Ob\v s\v c. \textbf{4} (1955), 125--236,
  [Russian]; English Translation: AMS Translations, Series 2, 85 (1969),
  115-143. \MR{76882}

\end{thebibliography}
 
\end{document}